\documentclass[reqno]{amsart}

\newtheorem{theorem}{Theorem}[section]
\newtheorem{lemma}[theorem]{Lemma}

\theoremstyle{definition}

\usepackage[T1]{fontenc}
\usepackage[utf8]{inputenc}
\usepackage{lmodern}
\usepackage{textcomp}
\usepackage{microtype}
\usepackage{tikz}
\usepackage{mathtools, bm, nccmath}
\usepackage{comment}

\usepackage[english]{babel}
\usepackage{csquotes}
\usepackage[export]{adjustbox}
\usepackage{circuitikz}

\usepackage{tabularx}

\usepackage{graphicx}
\usepackage{setspace}

\usepackage{amsmath}
\usepackage{amsfonts}
\usepackage{amssymb}
\usepackage{amsthm}
\usepackage{xcolor}%
\usepackage{soul}
\usepackage{graphicx} 
\usepackage{lipsum}
\numberwithin{equation}{section}

\newcommand*{\N}{\mathbb{N}}

\newcommand*{\R}{\mathbb{R}}
\newcommand*{\C}{\mathbb{C}}

\newcommand*{\E}{\mathbb{E}}

\newcommand {\e} {\varepsilon}

\newcommand {\ve} {\varepsilon}

\makeatletter\def\blfootnote{\xdef\@thefnmark{}\@footnotetext}\makeatother

\newcommand{\probP}{\text{I\kern-0.15em P}}

\usepackage{thmtools}

\declaretheorem[
	name=Theorem,
	numberwithin=section
	]{thm}

\declaretheorem[
	name=Remark,
	style=remark,
	numbered=no
	]{rem}

\usepackage[pdftex,pdfpagelabels]{hyperref}

\numberwithin{equation}{section}

\makeatletter
\@namedef{subjclassname@2020}{\textup{2020} Mathematics Subject Classification}
\makeatother

\calclayout

\begin{document}
\title[LACUNARY SERIES, NONLINEAR FUNCTIONALS AND BANACH SPACE STRUCTURE]
{\bf LACUNARY SERIES, NONLINEAR FUNCTIONALS AND BANACH SPACE STRUCTURE}

\author[I. Berkes, E. Stefanescu, and R. Tichy ]{Istvan Berkes, Eduard Stefanescu, and Robert Tichy}
\address{Institut f\"ur Analysis und Zahlentheorie, TU Graz, Steyrergasse 30, 8010 Graz, Austria}
\email{\href{mailto:berkes.istvan@renyi.hu }{berkes@renyi.hu }}
\email{\href{mailto:eduard.stefanescu@tugraz.at}{eduard.stefanescu@tugraz.at}}
\email{\href{mailto:tichy@tugraz.at}{tichy@tugraz.at}}

\subjclass[2020]{46B09, 46B25, 46E30, 60E15}
\keywords{Kadec–Pełczyński decomposition, Orlicz spaces, Interpolation spaces, Norm inequalities, Random series in Banach spaces}

\begin{abstract}
In a previous paper \cite{BT} we studied the asymptotic behavior of $\| \sum_{k=1}^N a_k X_{n_k}\|_p$ for lacunary sequences $(X_{n_k})$  of random variables in $L_p$ and used the result to give a necessary and sufficient condition for the first alternative in the Kadec-Pe{\l}czynski theorem in the case $1\le p<2$. In the present paper we extend this result for nonlinear functionals $f_k (a_1 X_{n_1}, \ldots, a_k X_{n_k})$, establishing a uniform version of the  subsequence principle of Aldous \cite{ald}. Moreover, we prove Kadec-Pe{\l}czynski type theorems in Orlicz spaces $L_\psi$.

\end{abstract}

\maketitle

\section{Introduction}

Call two sequences $(x_n)$ and $(y_n)$ defined in the Banach spaces $(B_1, \| \cdot \|_{B_1})$, resp.\ $(B_2, \| \cdot \|_{B_2})$
{\it equi\-valent} if for some constant $K>0$ we have
$$
K^{-1}\big\Vert \sum_{i=1}^n a_i x_i\big\Vert_{B_1} \le \big\Vert
\sum_{i=1}^n a_i y_i \big\Vert_{B_2} \le K\big\Vert \sum_{i=1}^n a_i
x_i\big\Vert_{B_1}
$$
for every $n\ge 1$ and every $(a_1,\ldots,a_n)\in \R^n$.
By a classical theorem of Kadec and
Pe{\l}czynski \cite{KP}, for any normalized weakly null sequence
$(x_n)$ in $L_p(0, 1)$, $p>2$
there exists a subsequence $(x_{n_k})$ which is
equivalent to the unit vector basis of $\ell_2$ or $\ell_p$. Thus every infinite
dimensional closed subspace of $L_p(0,1)$, $p>2$ contains a
subspace isomorphic to $\ell_2$ or $\ell_p$.
In the case when $\{|x_n|^p, \, n\ge 1\}$ is uniformly integrable,
the first alternative holds, while if the functions $(x_n)$ have disjoint
support, the second alternative holds trivially. The general case follows via a subsequence splitting argument
as in \cite{KP}.

The proofs in \cite{KP} depend on the study of subspaces of $L_p$ spanned by independent or lacunary sequences of random variables which, in turn, requires sharp inequalities for the norm of partial sums of such sequences. Classical examples are the Khinchin, Marcinkiewicz-Zygmund, Rosenthal inequalities and their variants, see e.g. \cite{KH,MZ2,MZ1,R1,R2}. In recent decades, the theory has been extended to rearrangement invariant, in particular Orlicz spaces, see e.g. \cite{A2} and the references therein. In particular, there is a characterization of rearrangement invariant spaces in which the Khinchin and Marcinkiewicz inequalities hold, see \cite{A1,A2,RS}.  A simple and direct proof in the special cases of Lorentz and Orlicz spaces was given by the authors in a preprint; see \cite{BST}. For a general overview and further results, see \cite{ALM,KLM,Mal} and the references therein.

For $1\le p<2$ the situation becomes more complex and many basic problems about the subspace structure of $L_p$ remain open until today.
But in \cite{BT} a necessary and sufficient condition was given for the first alternative in the Kadec-Pe{\l}czyinski theorem for the case $1\le p<2$, which we formulate below.  Using the terminology of \cite{bero}, call a sequence of $(X_n)$ of random variables {\it determining} if it has a limit distribution relative to any set $A$ in the probability space with $P (A) > 0$, i.e.,
there exists a distribution function $F_A$ such that $\lim P (X_n \le t | A) = F_A(t)$ as $n\to\infty$ for all continuity points $t$ of $F_A$. Here $P (\cdot|A)$ denotes conditional probability given $A$. As is shown in \cite{ald,bero}, there exists a random measure $\mu$ (i.e.\ a measurable map from the underlying probability space $(\Omega, {\mathcal{F}}, P )$ to $(\mathcal{M}, \pi)$, where ${\mathcal{M}}$ is the set of probability measures on $\mathbb{R}$ and $\pi$  is the Prohorov distance), such that for any $A$ with  $P(A)>0$ and any continuity point $t$ of $F_A$ we have $F_A(t) = \E_A (\mu (-\infty, t]),$ where $\E_A$ denotes conditional expectation given $A$. We call $\mu$  the {\it limit random measure} of $(X_n)$. Let $(Y_n)$ be a sequence of random variables on $(\Omega, {\mathcal{F}}, P )$  such that, conditionally on  $(X_1, X_2, \ldots, \mu)$, the $Y_n$ are i.i.d.\ with conditional distribution $\mu$. See \cite{ald}, p.\ 72; note that the construction of $(Y_n)$  requires an enlargement of the probability space. Clearly, $(Y_n)$ is an exchangeable sequence, we call it the {\it limit exchangeable sequence} of $(X_n)$.

A Young function, or Orlicz function, is a convex mapping
$\psi:[0,\infty)\to[0,\infty)$
satisfying
$\psi(x)/x\to\infty$ as $x\to\infty,$
 and
$\psi(x)/x\to 0$ as $x\to 0.$
In particular, every Young function is non-decreasing, continuous and locally Lipschitz on \((0,\infty)\), and hence differentiable almost everywhere. Moreover, its left and right derivatives, denoted by \(\psi'_-\) and \(\psi'_+\), exist at every point of \((0,\infty)\), see e.g. \cite[ch. 1]{KPP}.

Let $(\Omega, {\mathcal{F}}, P )$ be a probability space. 
The Orlicz norm of a measurable function $f:\Omega\to\R$ or $\C$ is given by
\begin{equation}\label{olnorm}
    \| f \|_{L^{\psi}(Y)} = \inf \left\{ \lambda > 0 \ : \ \int_{\Omega} \psi\left(\frac{|f(x)|}{\lambda}\right) dP(x) \leq 1 \right\}.
\end{equation}
The \emph{Orlicz-space} $L^\psi$ with assigned Young-function $\psi$ consists of the set of measurable functions $f$ such that $\|f\|_{L^\psi}$ is finite.
For $1\le p<\infty$, if $\psi(x)=x^p$, then $L^\psi=L^p$.
Since we only consider finite-measure spaces,  we have $\psi\ll\varphi$ if and only if $ L^{\varphi}\subset L^{\psi}$ and then
\begin{equation}\label{embedding}
    \|\cdot\|_{L^{\psi}}\ll\|\cdot\|_{L^{\varphi}},
\end{equation}
see e.g. Chapter $2$ of \cite{SFMA}. The notation \(f \ll g\) means that there exists a constant \(C>0\) such that \(|f| \le C g\) in the range under consideration.

In \cite{BT} the following result was proved:

\begin{theorem} \label{th1} Let $1\le p<2$ and let $(X_n)$ be a determining sequence of random variables such that $\|X_n\|_p=1$ for all $n\in \mathbb{N}$,  $\{|X_n|^p, n\ge 1\}$ is uniformly integrable and $X_n\to 0$ weakly in $L^p$. Let $\mu$ be the limit random measure of $(X_n)$.
Then there exists a subsequence $(X_{n_k})$ equivalent to the unit vector basis of $\ell^2$ if and only if
\begin{equation}\label{BTcond}
\int_{\mathbb{R}}x^2 d\mu(x)\in L^{p/2}.
\end{equation}
\end{theorem}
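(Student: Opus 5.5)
Let $\sigma^2:=\int_{\mathbb R}x^2\,d\mu(x)$ denote the conditional second moment of the limit random measure. The plan is to pass to the limit exchangeable sequence $(Y_n)$ and show that, along a suitable subsequence, the norms $\|\sum a_kX_{n_k}\|_p$ are comparable to $\|\sum a_kY_k\|_p$ uniformly in the coefficients. The latter is computed conditionally on $\mu$, where the $Y_k$ are i.i.d.

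\textbf{Step 1 (properties of $\mu$).} Since $\{|X_n|^p\}$ is uniformly integrable, we get $\E\int|x|^p\,d\mu<\infty$. Weak convergence $X_n\to 0$ gives $\E_A\int x\,d\mu=\lim \E_A X_n=0$ for every $A$ with $P(A)>0$, hence $\int x\,d\mu(x)=0$ almost surely. So, conditionally on $\mu$, the $Y_k$ are i.i.d.\ and centered.

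\textbf{Step 2 (norms of the exchangeable sums).} Condition on $\mu$ and apply the Marcinkiewicz--Zygmund/Rosenthal inequalities with the conditional expectation $\E_\mu$:
\[
\E_\mu\Big|\sum a_kY_k\Big|^p \asymp \E_\mu\Big(\sum a_k^2Y_k^2\Big)^{p/2}.
\]
For the direction "$\Rightarrow$", suppose the subsequence is equivalent to the $\ell^2$ basis. Take $a_k=N^{-1/2}$ for $k\le N$. The conditional CLT (when $\sigma<\infty$), or the divergence of $N^{-1}\sum Y_k^2$ on $\{\sigma=\infty\}$, combined with Fatou's lemma gives
\[
\E\,\sigma^p\le \liminf_N \E\Big|N^{-1/2}\sum_{k\le N}Y_k\Big|^p\ll 1,
\]
so $\sigma^2\in L^{p/2}$. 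For "$\Leftarrow$", use Jensen's inequality conditionally, since $p/2<1$:
\[
\E_\mu\Big(\sum a_k^2Y_k^2\Big)^{p/2}\le \Big(\sum a_k^2\Big)^{p/2}\sigma^p.
\]
Taking expectations gives the upper bound $\|\sum a_kY_k\|_p\ll\|a\|_2$. For the lower bound, use the conditional Khinchin-type estimate $\E_\mu|\sum a_kY_k|^p\gg\|a\|_2^p\,\sigma^p$. This holds because, for $p<2$ and centered i.i.d.\ variables with finite variance, the $\ell^2$ lower bound of the square function follows from a Paley--Zygmund argument. Under uniform integrability, this bound must be arranged with a truncation of $\mu$.

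\textbf{Step 3 (transfer from $(Y_k)$ to $(X_{n_k})$).} This is the main obstacle. I would use the almost-exchangeability/subsequence principle of Aldous and Berkes--P\'eter: for any $\varepsilon_k\downarrow 0$, there is a subsequence with $\sum_k\|X_{n_k}-Z_k\|_p$ small, where $(Z_k)$ is exchangeable with the same directing measure $\mu$. The difficulty is that this summable closeness only gives an $\ell^1$-type error $\sum|a_k|\,\|X_{n_k}-Z_k\|_p$. That error is dominated by $\|a\|_2\big(\sum\|X_{n_k}-Z_k\|_p^2\big)^{1/2}$, which is harmless for the upper bound and, if the errors are small enough, also for the lower bound, because in Step 2 the lower bound is $\gg\|a\|_2\,\|\sigma\|_p$. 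The remaining concern is that $\sigma$ could be degenerate. It is not, since $\|X_n\|_p=1$ and uniform integrability force $\E\int|x|^p\,d\mu=1$, hence $\sigma\not\equiv 0$. I expect the delicate part to be making the approximation in $L^p$ rather than only in distribution, which requires a truncation at levels tending to infinity (using uniform integrability) followed by a diagonal argument.
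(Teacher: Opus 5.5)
Your outline works, but Step 3 takes a genuinely different route from the one in \cite{BT} that the paper describes and reproduces in Section 2. There, $n_k$ is chosen inductively, replacing $Y_k$ by $X_{n_k}$ one coordinate at a time. Uniformity in $\mathbf{a}$ and $\ell$ comes from Ranga Rao's lemma (Lemma \ref{rrlemma}) applied to the class (\ref{29c}) on $\mathbb{R}\times S$, fed by $(X_n,\mu)\to(Y_1,\mu)$ in distribution (Lemma \ref{alemma}). This yields the $(1\pm\varepsilon)$ comparison of Theorem \ref{th2}. After that, (\ref{mzz}) applied conditionally on $\mu$ gives sufficiency, and necessity is deduced from $\|N^{-1/2}\sum_{k\le N}X_{n_k}\|_p=O(1)$, as in your Step 2.

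You instead use the Berkes--P\'eter almost exchangeable approximation together with the triangle inequality and Cauchy--Schwarz. This works, and the $L^p$ upgrade you flag as delicate is immediate. The $|Z_k|^p$ are identically distributed and integrable, so $\{|X_{n_k}-Z_k|^p\}$ is uniformly integrable, and Vitali's theorem turns $X_{n_k}-Z_k\to 0$ in probability into $\|X_{n_k}-Z_k\|_p\to 0$. A further subsequence (still exchangeable, with the same directing measure) then makes $\sum_k\|X_{n_k}-Z_k\|_p^2$ as small as you like, with no truncation or diagonal argument. For necessity, apply this inside the given $\ell^2$-equivalent subsequence; its subsequences are again $\ell^2$-equivalent.

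Your route is shorter. Since the error $\delta\|\mathbf{a}\|_2$ is absorbed by a lower bound $\rho_k(\mathbf{a})\ge c\|\mathbf{a}\|_2$, it even gives a $(1\pm\varepsilon)$ comparison. But it relies on a deep black box and on subadditivity of the norm. The Ranga Rao induction uses no triangle inequality, which is exactly why it extends to the nonlinear functionals of Theorem \ref{th3} and to the modulars $\E\psi(|\cdot|)$ behind Theorem \ref{th4}.

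One claim in Step 2 is false and must be replaced: the conditional bound $\E_\mu|\sum a_kY_k|^p\gg\|\mathbf{a}\|_2^p\sigma^p$ with a constant independent of $\mu$. Take a single term and $\mu=(1-\eta)\delta_0+\frac{\eta}{2}(\delta_M+\delta_{-M})$. Then $\E_\mu|Y_1|^p/\sigma^p=\eta^{1-p/2}\to 0$ as $\eta\to 0$. A Paley--Zygmund argument on a truncated square function can only give a $\mu$-dependent constant, not $\sigma^p$. So the lower bound $\gg\|\mathbf{a}\|_2\|\sigma\|_p$ that you feed into Step 3 is unjustified as stated.

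What you actually need is only $\|\sum a_kY_k\|_p\ge c\|\mathbf{a}\|_2$ for some $c>0$. This is the lower half of (\ref{mzz}) with $p=1$, applied conditionally on $\mu$; that is legitimate since $\int x\,d\mu=0$ and, under (\ref{BTcond}), $\sigma<\infty$ a.s. It gives $\|\sum a_kY_k\|_p\ge \E\,\E_\mu|\sum a_kY_k|\ge C\|\mathbf{a}\|_2\,\E\int|x|\,d\mu(x)$. The last expectation is positive because $\E\int|x|^p\,d\mu=1$. This is also how the paper obtains the lower half of (\ref{mzphi}).
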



\smallskip
The basic step in the proof of Theorem \ref{th1} was the following asymptotic result for lacunary series:

\begin{theorem} \label{th2} Under the assumptions of Theorem \ref{th1} there exists, for every $\ve>0$, an increasing sequence $(n_k)$ of positive integers such that
$$ (1-\ve)\rho_k(a_1, \ldots, a_k) \le \|\sum_{i=1}^k a_i X_{n_i}\|_p\le (1+\ve)\rho_k(a_1, \ldots, a_k)$$
for every $k\ge 1$ and $(a_1, \ldots, a_k)\in \mathbb{R}^k$, where
\begin{equation}\label{rho1}
\rho_n(a_1, \ldots, a_n)=\| \sum_{i=1}^n a_i Y_i\|_p.
\end{equation}
\end{theorem}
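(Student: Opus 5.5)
The plan is to compare $\sum_{i\le k} a_i X_{n_i}$ with $\sum_{i\le k} a_i Y_i$ via a coupling/approximation argument in the spirit of Aldous' subsequence principle. Two facts drive it. By homogeneity it suffices to treat coefficient vectors on the unit sphere of $\R^k$ (with a fixed reference norm, say $\ell^2$). The quantity $\rho_k$ is a genuine norm on $\R^k$: the $Y_i$ are conditionally i.i.d.\ given $\mu$, and $\int x\,d\mu=0$ a.s.\ because $X_n\to0$ weakly; this mean-zero property follows from uniform integrability together with the determining property. So $\rho_k$ is bounded below by $c_k|a|$ on the sphere, and relative errors can be controlled by absolute errors on a compact set.

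The subsequence $(n_k)$ is built inductively. Suppose $n_1<\dots<n_{k-1}$ are already chosen. By uniform integrability we may truncate: replace $X_n$ by $X_n\mathbf 1_{\{|X_n|\le M\}}$, at cost $\ve_k$ in $L^p$ uniformly in $n$. For the step we use the determining property in its strong form (Aldous, Berkes--Rosén). For any $\sigma(X_{n_1},\dots,X_{n_{k-1}})$-measurable bounded $Z$, the pair $(Z,X_n)$ converges in distribution to $(Z,Y)$, where $Y$ has conditional law $\mu$ given $\mathcal F$. Applying this with $Z=\sum_{i<k}a_iX_{n_i}$, we can choose $n_k$ so large that
$$\Bigl|\,\bigl\|Z+a_kX_{n_k}\bigr\|_p^p-\bigl\|Z+a_kY\bigr\|_p^p\,\Bigr|<\delta_k$$
uniformly over a finite $\delta_k$-net of coefficient vectors. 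Uniform $p$-integrability justifies passing from convergence in distribution to convergence of $p$-th moments. Equicontinuity of both sides in $a$, whose Lipschitz constants are bounded by $\max_i\|X_{n_i}\|_p=1$ and $\|Y\|_p\le1$, then extends the bound to all coefficients on the sphere.

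Iterating, we replace $X_{n_k}, X_{n_{k-1}},\dots,X_{n_1}$ one at a time by $Y_k,\dots,Y_1$. The obstacle here is that at step $j$ the partial sum also contains already-substituted $Y$'s. So the convergence statement is really needed for $(X_{n_1},\dots,X_{n_{j-1}},Y_{j+1},\dots,Y_k,X_n)$. Conditionally on $\mathcal F$ and $\mu$, the $Y$'s are independent of everything, so the needed statement is the weak convergence of the joint law of $(Z,\mu,X_n)$ to that of $(Z,\mu,Y)$. This follows from the determining property applied to sets $A$ in $\sigma(Z,\mu)$ (Aldous' extension to $\mu$-measurable conditioning). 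The per-step errors telescope to at most $\sum_j\delta_j$.

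The main difficulty is uniformity in $k$ and in $a$. A single $n_k$ must work for all future $k'>k$ and all coefficient vectors, yet an absolute error $\delta_j$ added at step $j$ has to be small relative to $\rho_{k'}(a)$, which could in principle be as small as $|a_j|$-scale. To handle this, I would make the error at step $j$ proportional to the size of the coefficients involved. Specifically, I would prove the sharper bound $\bigl|\|Z+a_jX_{n_j}\|_p-\|Z+a_jY_j\|_p\bigr|\le\delta_j\|(a_1,\dots,a_j)\|$. Combined with the lower bound $\rho_{k'}(a)\ge c\,\|a\|_{\infty}$, which holds with $c=\inf\|Y_1\|_p$ by conditional symmetrization/Jensen, and choosing $\sum_j\delta_j j<\ve c$, this gives the multiplicative $(1\pm\ve)$ estimate for all $k$ simultaneously.
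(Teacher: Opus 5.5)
Your outline follows the paper's argument (the one from [BT] reproduced in Section 2) in its skeleton: $n_j$ is chosen inductively, $X_{n_j}$ is replaced by $Y_j$ one index at a time, you condition on $(\mathbf X,\mu)$ so that the later $Y_i$ are integrated out, and you use the joint convergence $(X_n,Z)\to(Y_j,Z)$ for $\sigma(\mathbf X)$-measurable $Z$ (Lemma \ref{alemma}). But the step that carries the proof is missing. The index $n_j$ must serve every $\ell>j$ and every $\mathbf a\in\R^\ell$ at once. So after conditioning you need
\[
\E\, G_{\mathbf a,\ell}(X_{n_1},\ldots,X_{n_{j-1}},X_n,\mu)\longrightarrow \E\, G_{\mathbf a,\ell}(X_{n_1},\ldots,X_{n_{j-1}},Y_j,\mu),
\]
\[
G_{\mathbf a,\ell}(x_1,\ldots,x_{j-1},t,\nu)=\E\Bigl|\sum_{i<j}a_ix_i+a_jt+\sum_{i=j+1}^{\ell}a_i\xi_i^{(\nu)}\Bigr|^p ,
\]
uniformly over an index set that is infinite-dimensional and noncompact. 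Your finite $\delta$-net with Lipschitz continuity in $\mathbf a$ only works for a fixed number of coordinates. Weak convergence plus Lipschitz control in $t$ does not give uniformity either: you need a modulus of continuity in the measure variable $\nu$ that is uniform over the whole family. Without it uniformity really fails. On $[0,1]$ with $V(\omega)=\omega$, take $X_n=V\operatorname{sign}\sin(2\pi nV)$; then $\mu=\frac12(\delta_V+\delta_{-V})$. Over the functions $t\,s(\nu)$ with $|s|\le 1$, all $1$-Lipschitz in $t$, one gets $\sup_s|\E X_n s(\mu)-\E Y_1 s(\mu)|=\E|X_n|=\frac12$ for every $n$. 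Your ``sharper bound'' with error $\delta_j\|(a_1,\ldots,a_j)\|$ is exactly this missing uniform statement, asserted rather than proved. It is also stronger than what the available continuity yields: the natural modulus in $\nu$ is of order $\|\mathbf a\|_2\,d(\nu,\lambda)$, which involves the future coefficients $a_{j+1},\ldots,a_\ell$ and is not controlled by $\|(a_1,\ldots,a_j)\|$.

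The paper supplies this step as follows. It puts the Wasserstein metric $d=W_2$ on the space $S$ of mean-zero measures with finite second moment. Under the quantile coupling the differences $\xi_i^{(\nu)}-\xi_i^{(\lambda)}$ are i.i.d.\ with mean zero, so $\|\sum_i a_i(\xi_i^{(\nu)}-\xi_i^{(\lambda)})\|_p\le \|\mathbf a\|_2\, d(\nu,\lambda)$. The lower Marcinkiewicz--Zygmund bound gives $\|\mathbf a\|_2\ll\rho_\ell(\mathbf a)$, and also $\rho_\ell(\mathbf a)\ge c|a_1|$. Hence the family $g^{\mathbf a,\ell}(t,\nu)/\rho_\ell(\mathbf a)$ (for $p$-th powers, $h^{\mathbf a,\ell}/\|\mathbf a\|_2^p$) is equicontinuous on $\R\times S$ uniformly in $\ell$ and $\mathbf a$. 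It is dominated by a continuous function of $(|t|,d(\nu,\nu_0))$ that is integrable under the limit law; this uses uniform integrability of $(X_n)$ and integrability of $\int x^2\,d\mu$ to the appropriate power, as assumed in Section 2. Ranga Rao's lemma (Lemma \ref{rrlemma}) then gives convergence uniform in $\ell$ and $\mathbf a$. Note that the error at step $j$ is normalized by the full $\rho_\ell(\mathbf a)$, not by the first $j$ coefficients. Taking it to be $\varepsilon 2^{-j}\rho_\ell(\mathbf a)$ makes the telescoped sum at most $\varepsilon\rho_\ell(\mathbf a)$ directly, so your $\|\mathbf a\|_\infty$ bookkeeping with $\sum_j j\delta_j$ is unnecessary.
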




Clearly, $\rho_n(a_1, \ldots, a_n)$ is a symmetric function and thus Theorem \ref{th2} implies the results on almost symmetric sequences in $L_p$ spaces in \cite{ber,gura1,jo4}.

Note that for $p\ge 2$ the assumptions of Theorem \ref{th2} imply that the limit distribution $F$ of $(X_n)$ satisfies $\int_\mathbb{R} |x|^p dF(x)<\infty$, and thus
$$ {\mathbb E} \left( \int_\mathbb{R} |x|^p d\mu(x)\right)<\infty,$$
which implies (\ref{BTcond}) by the monotonicity of the $L^p$ norm. Actually, for $p\ge 2$ the conclusion of Theorem \ref{th2} is easily to prove and the statement was known
in the literature as early as in the 1930's.


The  purpose of the present paper is to extend Theorem \ref{th2} for general nonlinear functionals $f_k (a_1 X_{n_1}, \ldots, a_k X_{n_k})$ of lacunary sequences $(X_{n_k})$. We will namely prove the following


\begin{theorem}\label{th3} Let $(X_n)$ be a determining sequence of r.v.'s
with limit exchangeable sequence $(Y_n)$ with limit random measure $\mu$ having mean 0 a.s.\ and expectation satisfying
\begin{equation}\label{finitevar}
\E\left(\int_{\mathbb{R}} x^2 d\mu(x)\right)^{1/2}<\infty.
\end{equation}
Let $(S, d)$ denote the space of probability measures with mean $0$ and finite second moment equipped with the Wasserstein $W_2$ metric 
$$ d(\nu, \lambda)=\left( \int_0^1 (F_{\nu}^{-1} (x) - F_{\lambda}^{-1} (x))^2 dx \right)^{1/2}, $$
where $F_\nu$ and $F_\lambda$ denote the distribution functions belonging to $\nu$ and $\lambda$.
Let $f_k: \mathbb{R}^k\to\mathbb{R}$ ($k=1, 2, \ldots)$ be nonnegative measurable functions satisfying the condition
\begin{align} \label{a2}
& |\E f_{k+1} (a_1 x_1,  a_2x_2, \ldots,  a_{j-1}x_{j-1},  a_j t, a_{j+1}\xi_{j+1}^{(\nu)}, \ldots,  a_k\xi_k^{(\nu)}) \nonumber \\
&\phantom{999999999999}- \E f_{k+1} (a_1 x_1,  a_2x_2, \ldots, a_{j-1}x_{j-1},  a_j t', a_{j+1}\xi_{j+1}^{(\lambda)}, \ldots,  a_k\xi_k^{(\lambda)})|\nonumber \\
&\le a_j |t-t'| +\E f_k (a_1 Y_1, \ldots, a_k Y_k)\, d (\nu, \lambda)
\end{align}
for every $k\ge 1$, $1\le j\le k$, $\nu,\lambda \in S$, real numbers $t, t',  a_1,\ldots,a_k, x_1, \ldots, x_{j-1}$
and {\rm i.i.d.}\ sequences $(\xi_n^{(\nu)}),(\xi_n^{(\lambda)})$ with
respective distributions $\nu$ and $\lambda$. (The finiteness of the expectations in (\ref{a2}) is assumed here.)
Define, generalizing  (\ref{rho1}),
\begin{equation}\label{rhodef}
\rho_k (a_1, \ldots, a_k):=\E f_k (a_1 Y_1, \ldots, a_k Y_k),
\end{equation}
and assume that
\begin{equation} \label{lowerbound}
\rho_k (a_1, \ldots, a_k) \ge c|a_j|  \ \ 1\le j \le k
\end{equation}
for some constant $c>0$.
Assume finally that $\{|X_n|,\, n\ge 1\}$  is uniformly integrable and
$$ \E f(a_1 X_1, \ldots a_n X_n) <\infty  \ \ \text{for all} \ \ n\ge 1 \ \ \text{and} \ \ (a_1, \ldots, a_n)\in \mathbb{R}^n. $$
Then there exists, for every $\e>0$, a subsequence $(X_{n_k})$ satisfying
\begin{equation}\label{limit}  (1-\e) \rho_k(a_1, \ldots, a_k) \le  \E f_k (a_1 X_{n_1}, \ldots, a_k X_{n_k})
\le (1+\e) \rho_k (a_1, \ldots, a_k)
\end{equation}
for any $k\ge 1$ and any $(a_1,\ldots,a_k)\in {\mathbb R}^k$.
\end{theorem}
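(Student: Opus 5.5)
The plan is to follow the Aldous/Berkes–Péter subsequence construction used for Theorem \ref{th2}. Inside it, we run a telescoping (Lindeberg-type) replacement argument in which the $X_{n_i}$ are exchanged one at a time for the limit variables $Y_i$. Condition (\ref{a2}) controls each replacement, and the lower bound (\ref{lowerbound}) turns additive errors into multiplicative ones.

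\textbf{Step 1 (discretization and conditional approximation).} Since $(X_n)$ is determining, there is a subsequence $(X_{n_k})$ and a sequence of finite $\sigma$-fields $\mathcal{F}_1\subset\mathcal{F}_2\subset\cdots$ with the following properties:
\begin{itemize}
\item $X_{n_1},\dots,X_{n_{k-1}}$ are approximately $\mathcal{F}_{k}$-measurable;
\item $\mu$ is approximately $\mathcal{F}_k$-measurable;
\item the conditional distribution of $X_{n_k}$ given each atom $A$ of $\mathcal{F}_k$ is within $W_2$-distance $\delta_k$ of $\mu$ averaged on $A$.
\end{itemize}
Using uniform integrability of $|X_n|$ and (\ref{finitevar}), we get the quantitative estimate
\[
\E\, d\bigl(\mathcal{L}(X_{n_k}\mid\mathcal{F}_k),\mu\bigr)\le \delta_k .
\]
Here we may truncate to make the $W_2$ distance meaningful, and we choose $\sum_k \delta_k\le \e c'$ for a small constant $c'$. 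This is the construction of \cite{ald,bero}. A Skorokhod-type coupling then realizes $X_{n_k}=G_k(U_k,\cdot)$ with $U_k$ uniform and independent of $\mathcal{F}_k$, up to errors of size $\delta_k$ in $L^1$, and $Y_k$ is coupled analogously via $F_\mu^{-1}(U_k)$.

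\textbf{Step 2 (telescoping).} For fixed $k$ and $(a_i)$, write
\[
\E f_k(a_1X_{n_1},\dots,a_kX_{n_k})-\rho_k=\sum_{j=1}^k (\Delta_j),
\]
where $\Delta_j$ is the difference between the hybrid with $X$'s in positions $1,\dots,j$ and $Y$'s after, and the hybrid with $X$'s in positions $1,\dots,j-1$ and $Y$'s after. The $Y$-tail after position $j$ is, conditionally on $\mu$ and the past, i.i.d.\ with law $\mu$.

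Condition on $\mathcal{F}_j$ and on the past values $x_1,\dots,x_{j-1}$. Then (\ref{a2}), applied with $f_{k}$ in the role of $f_{k+1}$ (the indexing in (\ref{a2}) is shifted; I would apply it at level $k$ with the $f$ of one lower order on the right), bounds $|\Delta_j|$ by two terms. The first is $|a_j|\,\E|X_{n_j}-Y_j^{*}|$ with the coupled $Y_j^*$. The second is $\rho\cdot d(\nu,\lambda)$, where $\nu$ is the conditional law from the approximation and $\lambda$ is the true $\mu$ restricted to the atom. Both terms are $\ll(|a_j|+\rho_{k-1})\,\delta_j$.

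\textbf{Step 3 (summing).} Since $\rho_{k-1}$ is comparable to $\rho_k$, summing over $j$ gives
\[
|\E f_k-\rho_k|\ll \sum_j |a_j|\delta_j+\rho_k\sum_j\delta_j .
\]
By (\ref{lowerbound}), $|a_j|\le \rho_k/c$, so the right side is at most $C\rho_k\sum_j\delta_j\le\e\rho_k$. This proves (\ref{limit}) uniformly in $k$ and $(a_i)$.

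\textbf{Main obstacle.} The hard step is Step 1 combined with making $\rho_{k-1}\ll\rho_k$ legitimate. The error in (\ref{a2}) is stated with $\E f_k(a_1Y_1,\dots)$, which is a lower-order functional, and it must be compared with $\rho_k$. If monotonicity in $k$ is not available directly, I would try to obtain it by applying (\ref{a2}) with $t=t'=0$.

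Beyond that, I expect the real difficulty to be the truncation needed so that conditional laws of $X_{n_j}$ lie in $S$ and are $W_2$-close to $\mu$. Only uniform integrability of $|X_n|$ is available, not of $X_n^2$. I would handle this by truncating at a level $M_j$ and absorbing the truncation error through the Lipschitz term $a_j|t-t'|$, whose cost is controlled in $L^1$.
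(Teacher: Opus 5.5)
There is a genuine gap, and it sits in the vague part of Step~1. Condition (\ref{a2}) gives regularity in only two places: the active slot $t$ (Lipschitz with constant $|a_j|$) and the law $\nu$ of the i.i.d.\ tail. It gives nothing in the frozen past $x_1,\dots,x_{j-1}$. In $\Delta_j$ your pointwise bound $|a_j|\,\E|X_{n_j}-Y_j^*|$ is fine, but you must then pass from $Y_j^*$ to the true $Y_j$, the one that is conditionally i.i.d.\ $\mu$ given $\mathbf X$. Your $Y_j^*=F_\mu^{-1}(U_j)$ uses a $U_j$ that is independent only of the finite $\sigma$-field $\mathcal F_j$, while $X_{n_1},\dots,X_{n_{j-1}}$ are merely approximately $\mathcal F_j$-measurable. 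So $Y_j^*$ is not conditionally $\mu$-distributed given the actual past. Absorbing that discrepancy would require continuity of $f_k$ in $x_1,\dots,x_{j-1}$, which is not assumed.

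This cannot be repaired from (\ref{a2}) alone. Take $\Omega=[0,1)$ with Lebesgue measure, $b_m(\omega)$ the binary digits, $r_m=(-1)^{b_m}$, and $X_n=r_n(1+2^{-n}\omega)$. Since $|X_n-r_n|\le 2^{-n}$, $(X_n)$ is determining with $\mu\equiv\frac12(\delta_{-1}+\delta_1)$, $|X_n|\le 2$, and the $Y_i$ are i.i.d.\ Rademacher independent of $\mathbf X$. Let $f_1(x)=|x|$ and $f_\ell(x)=\frac12\bigl(\sum_{i\le\ell}|x_i|+\theta(x_1)x_2\bigr)$ for $\ell\ge 2$. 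Here $\theta(y)=(-1)^{b_m(4^m|y|-1)}$ if $4^{-m}\le|y|<2\cdot4^{-m}$ for some $m\ge1$, and $\theta(y)=0$ otherwise. All laws in $S$ are centred, so the cross term has zero expectation whenever slot $2$ carries a $\xi$ or a $Y$; it cancels when $x_1,x_2$ are both frozen. Hence $\rho_\ell(\mathbf a)=\frac12\sum_i|a_i|$ and (\ref{lowerbound}) holds with $c=\frac12$. Moreover (\ref{a2}) holds, read with $|a_j|$ and the same index on both sides as the paper applies it: this follows from $|\theta|\le1$ and $\bigl|\E|\xi^{(\nu)}|-\E|\xi^{(\lambda)}|\bigr|\le d(\nu,\lambda)$. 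But $\omega=2^{n_1}(|X_{n_1}|-1)$, so for $a_1=4^{-(n_1+n_2)}$ one gets $\theta(a_1X_{n_1})=r_{n_2}$. Hence $\E f_2(a_1X_{n_1},X_{n_2})\ge1$ while $\rho_2(a_1,1)<0.51$, for every choice of $n_1<n_2$.

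The paper argues differently from you. It never couples $X_{n_k}$ with $Y_k$; instead it conditions on $(\mathbf X,\mu)$, so the $Y$-tail is exactly i.i.d.\ $\mu$. Uniformity in $(\ell,\mathbf a)$ then comes from Lemma~\ref{alemma} and Ranga Rao's Lemma~\ref{rrlemma} applied to the class $g^{\mathbf a,\ell}(t,\nu)/\rho_\ell(\mathbf a)$. There the metric $d$ is only evaluated at the tail law $\mu\in S$, and $X_n$ enters only through the Lipschitz slot, so uniform integrability of $|X_n|$ is all that is used.

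Consequently your two announced obstacles do not arise. You need no $W_2$-closeness of $\mathcal L(X_{n_j}\mid\mathcal F_j)$ to $\mu$, which uniform integrability of $|X_n|$ could not give anyway. The issue of $\rho_{k-1}$ versus $\rho_k$ is an artefact of reading the index shift in (\ref{a2}) literally. Setting $t=t'=0$ would not yield monotonicity in $k$ in any case.

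However, for $k\ge2$ the paper only says the step goes ``exactly in the same way''. Applying Ranga Rao's lemma to $(X_{n_1},\dots,X_{n_{k-1}},X_n,\mu)$ then needs equicontinuity of $\E f_\ell(a_1x_1,\dots,a_kt,a_{k+1}\xi^{(\nu)}_{k+1},\dots)/\rho_\ell(\mathbf a)$ in the past coordinates too. That is the same ingredient missing from your argument, and the example shows it is genuinely needed. Under an extra hypothesis of this kind, both the paper's argument and a discretization argument like yours can be completed. One example is Lipschitz dependence on each $x_i$ with constant $|a_i|$, as for $f_\ell(x)=|x_1+\dots+x_\ell|$.
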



\medskip
By de Finetti's theorem, exchangeable sequences are conditionally i.i.d.\ with respect to their tail $\sigma$-algebra  and consequently they satisfy all limit theorems of i.i.d.\ random variables in a mixed form. By a heuristic principle {\it ('subsequence principle')}, formulated by Chatterji in \cite{cha},
the same holds for sufficiently thin subsequences of any sequence of r.v.'s satisfying norm boundedness conditions. Limit theorems for a sequence $(X_n)$ of random variables are asymptotic results for a sequence of functionals $f_k(X_1, X_2, \ldots)$ $(k=1, 2, \ldots)$ of the sequence $(X_n)$ and under suitable technical assumptions on the functionals $f_k$, Aldous \cite{ald} gave a rigorous proof of
the subsequence principle.
Note that relation (\ref{limit}) in Theorem \ref{th3} is also an asymptotic relation, except that it involves weighted functionals  $f_k (a_1 X_{n_1}, \ldots, a_k X_{n_k})$ and the limit relation is uniform in $(a_1, a_2, \ldots)$, a fact crucial for applications in Banach space theory.
The uniformity of the result is guaranteed by the equicontinuity relation (\ref{a2}).



In the case when  $f_k (x_1, \ldots, x_k)= |x_1+ \ldots +x_k|$,  the equicontinuity relation (\ref{a2}) was verified in \cite{BT} using the Marcinkiewicz-Zygmund inequality
\begin{equation} \label{mzz}
C \|\xi\|_1  \left( \sum_{i=1}^k a_i^2\right)^{1/2} \le \left\| \sum_{i=1}^k a_i \xi_i \right\|_p \le \| \xi\|_2 \left( \sum_{i=1}^k a_i^2\right)^{1/2}
\end{equation}
for $p=1$; here $(\xi_n)$ is an i.i.d.\ sequence with mean zero, and finite second moment. This yields Theorem 1.2 for $p=1$. For $f_k (x_1, \ldots, x_k)= |x_1+ \ldots +x_k|^p$, $1\le p< 2$, an estimate for the left hand side of (\ref{a2}) is given in \cite{BT}, page 2068, but this estimate is of a different form as (\ref{a2}). Thus, purely  formally,
Theorem \ref{th2} is not a special case of Theorem \ref{th3} for $1<p<2$. Still, the proof of Theorem \ref{th3} works in this case, since by the estimate for $|g^{{\bf a}, \ell}(t, \nu) - g^{{\bf a}, \ell} (t'\ \nu')|$ in \cite{BT}, p.\ 2061 the class of functions in (\ref{29c})  satisfies assumptions (a), (b) of the subsequent Lemma \ref{rrlemma}, and this is all we need for the argument.

For nonlinear functionals $f_k$ the verification of (\ref{a2}) is generally technically complicated, but intuitively it is clear that (\ref{a2}) holds for a very large class of functionals $f_k$. Note that (\ref{a2}) holds trivially for $f_k(x_1, \ldots, x_k)=\max_{1\le j\le k} |x_j|.$

A further interesting consequence of Theorem 1.3 is the following


\begin{theorem} \label{th4}
Let $\psi$ be a Young function with $x\ll \psi(x) \ll x^p$,  $p\in[1,2)$ and let $(X_n)$ be a determining sequence such that $\|X_n\|_{L^\psi}=1$ for all $n\in \mathbb{N}$, , $\{\psi(|X_n|), n\ge 1\}$ is uniformly integrable and $X_n\to 0$ weakly in $L^\psi$. Let $\mu$ be the limit random measure of $(X_n)$.
Then there exists a subsequence $(X_{n_k})$ equivalent to the unit vector basis of $\ell^2$ if and only if
$$ \int_{\mathbb{R}} x^2 d\mu(x) \in L^{\sqrt{\psi}}.$$
\end{theorem}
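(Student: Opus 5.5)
Since Theorem \ref{th4} is derived from Theorem \ref{th3}, we follow the route by which Theorem \ref{th1} was obtained from Theorem \ref{th2} in \cite{BT}. The basic choice is to apply Theorem \ref{th3} to the Orlicz-type functionals
\[
f_k(x_1,\ldots,x_k)=\psi\Big(\Big|\sum_{i=1}^k x_i\Big|\Big).
\]
More precisely, one works with the Luxemburg-norm version $\|\sum a_iX_{n_i}\|_{L^\psi}$: for each fixed $\lambda$, apply the theorem to $f_k=\psi(|\cdot|/\lambda)$. Alternatively, since $\psi$ lies between $x$ and $x^p$, one can work with the modular and use the $\Delta_2$ property, which follows from $\psi\ll x^p$ together with convexity. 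This converts modular comparability into norm comparability.

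\textbf{Step 1: reduce to the limit exchangeable sequence.} Uniform integrability of $\psi(|X_n|)$ gives uniform integrability of $|X_n|$, since $x\ll\psi$. Weak nullity gives that $\mu$ has mean $0$ a.s. The first task is to verify (\ref{finitevar}) and the equicontinuity (\ref{a2}) for $f_k=\psi(|x_1+\cdots+x_k|)$.

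For (\ref{a2}), use convexity of $\psi$ and the bound $\psi'(x)\ll x^{p-1}$. This gives a mean-value estimate
\[
|\psi(|u|)-\psi(|v|)|\le \psi'(|u|+|v|)\,|u-v|.
\]
Coupling $\xi^{(\nu)}$ and $\xi^{(\lambda)}$ through quantile functions makes $\E|\sum a_i(\xi_i^{(\nu)}-\xi_i^{(\lambda)})|^2$ equal to $\sum a_i^2\, d(\nu,\lambda)^2$. A Hölder inequality together with the Marcinkiewicz--Zygmund inequality (\ref{mzz}) then yields the form (\ref{a2}), possibly only in the weaker form of conditions (a), (b) of Lemma \ref{rrlemma}, as in the remark on the case $1<p<2$. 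The lower bound (\ref{lowerbound}) follows from the left side of (\ref{mzz}).

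\textbf{Step 2: apply Theorem \ref{th3}.} This produces a subsequence with
\[
\Big\|\sum a_iX_{n_i}\Big\|_{L^\psi}\asymp \Big\|\sum a_iY_i\Big\|_{L^\psi}.
\]
The problem is thus reduced to deciding when $(Y_n)$ is equivalent to the $\ell^2$ basis in $L^\psi$.

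\textbf{Step 3: characterize the $\ell^2$ case for $(Y_n)$.} Conditionally on $\mu$, the $Y_i$ are i.i.d.\ with mean $0$ and variance $\sigma^2=\int x^2\,d\mu$. By the conditional Khinchin/Marcinkiewicz--Zygmund inequalities, $|\sum a_iY_i|$ is comparable in conditional $L^\psi$-type moments to $(\sum a_i^2)^{1/2}\sigma$. One has to do this with care, because $\psi$ is sub-quadratic: the upper bound comes from conditional Jensen in the form
\[
\E\psi\Big(\Big|\sum a_iY_i\Big|\Big)\le \E\,\psi\Big(\big(\textstyle\sum a_i^2\big)^{1/2}\sigma\Big),
\]
using concavity of $\psi(\sqrt{\cdot})$, or a regularized version of it. The lower bound comes from a Paley--Zygmund argument conditionally on $\mu$.

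Hence $\|\sum a_iY_i\|_{L^\psi}\asymp (\sum a_i^2)^{1/2}\,\|\sigma\|_{L^\psi}$, and this is finite if and only if $\sigma^2\in L^{\sqrt\psi}$, interpreted as $\psi(\sigma)$ being integrable. This gives sufficiency. For necessity, if some subsequence is $\ell^2$-equivalent, then $\|\sum_{i\le n}X_{n_i}\|_{L^\psi}\ll\sqrt n$. Passing to a further subsequence where Step 2 applies, and using Fatou together with the conditional CLT, gives $\E\psi(c\,\sigma)<\infty$.

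\textbf{Main obstacle.} I expect the hardest step to be the verification of (\ref{a2}), or of its substitute (a), (b), for a general Young function $\psi$. Only growth comparisons $x\ll\psi\ll x^p$ are available, so the derivative bounds must be produced from convexity alone. I would first try to reduce to the power case by writing $\psi'(x)\le C\psi(x)/x$ and reusing the estimate from \cite{BT}, p.\ 2061.
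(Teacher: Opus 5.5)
Your overall architecture matches the paper's: a Lipschitz bound for $\psi$, equicontinuity via Lemma \ref{rrlemma}, comparison with $(Y_n)$, then an $\ell^2$ analysis of $(Y_n)$. The reduction as you state it, however, does not go through.

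Theorem \ref{th3} cannot be applied as a black box to $f_k=\psi(|x_1+\cdots+x_k|)$, for three reasons:
\begin{itemize}
\item Condition (\ref{a2}) fails, because $\psi'$ is unbounded and so the Lipschitz constant in $t$ grows with $x_1,\ldots,x_{j-1}$. You do flag this one.
\item Condition (\ref{lowerbound}) also fails. A Young function has $\psi(x)/x\to0$ at $0$, so $\rho_k(s\mathbf a)=\E\psi(s|\sum a_iY_i|)=o(s)$ as $s\to0$. Your appeal to (\ref{mzz}) conflates the homogeneous $L^1$-norm with the non-homogeneous modular.
\item Condition (\ref{finitevar}) is not a consequence of the hypotheses. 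It is available only in the sufficiency direction.
\end{itemize}
The proof of Theorem \ref{th3} normalizes by $\rho_\ell(\mathbf a)$ precisely through (\ref{lowerbound}). So your fallback to ``(a), (b) in a weaker form'' leaves open the real issue: which normalized class is equicontinuous and dominated uniformly in $\mathbf a,\ell$.

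The paper bypasses Theorem \ref{th3}. It uses (\ref{2n}), which is your mean-value bound, to transfer the estimates of \cite{BT} to $h^{\mathbf a,\ell}$ normalized by $\|\mathbf a\|_2^p$. It checks (a), (b) for that class and reruns the induction of \cite{BT}, obtaining modular comparability uniformly in $\mathbf a$. Since passing to $\psi(\cdot/C)$ only rescales $\mathbf a$, one subsequence works at all scales. Applying the theorem separately for each $\lambda$, as you propose, would give $\lambda$-dependent subsequences. Monotonicity of $G(c)/c$ for $G(c)=\E\psi(c|f|)$ then gives norm ratios in $[1-\ve,1+\ve]$ without any $\Delta_2$ condition.

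This matters because $\Delta_2$ does not follow from convexity and $\psi\ll x^p$. Take $\psi'(x)=x_k^{p-1}$ on $[x_k,x_{k+1})$ with $x_{k+1}/x_k\to\infty$. Then $\psi(x)\le x^p$, yet $\psi(2x_k)/\psi(x_k)\to\infty$. The same example defeats your fallback bound $\psi'(x)\le C\psi(x)/x$.

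The necessity argument is circular. You pass to a further subsequence ``where Step 2 applies'', but Step 2 presupposes that $\mu$ has finite second moment a.s.\ and satisfies (\ref{finitevar}). That is exactly what must be proved. The paper relies instead on the comparison of \cite{BT} (Theorem \ref{th2}), which holds under the assumptions of Theorem \ref{th1} with no variance condition.

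You can also avoid uniform comparison altogether:
\begin{itemize}
\item Iterate Lemma \ref{alemma} to get $N$-tuples from the $\ell^2$-subsequence whose joint laws converge to that of $(Y_1,\ldots,Y_N)$.
\item By lower semicontinuity of the modular, $\E\psi(|N^{-1/2}\sum_{i\le N}Y_i|/K)\le1$, where $K$ is the upper $\ell^2$ constant.
\item The conditional CLT together with Fatou's lemma then gives $\sigma<\infty$ a.s.\ and $\E\psi(\sigma|Z|/K)\le1$, with $Z$ standard normal independent of $\mu$.
\end{itemize}

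In Step 3, concavity of $u\mapsto\psi(\sqrt u)$ fails, even up to equivalence, for the $\psi$ above. So your conditional Jensen bound needs extra regularity of $\psi$. A conditional Paley--Zygmund bound cannot be uniform in $\mathbf a$ with only second moments; take $\mathbf a=e_1$. The lower $\ell^2$ bound should instead come from $\|\cdot\|_{L^\psi}\gg\|\cdot\|_1$ and (\ref{mzz}), as in the paper.

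That said, your Step 3 makes explicit where the hypothesis on $\int x^2\,d\mu$ enters. The paper's (\ref{mzphi}) does not, since its upper bound is stated in terms of $\|Y\|_2$.
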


\bigskip
We also note that under $\psi(x) \gg x^p$, $p>2$, Kadec-Pe{\l}czynski-type results also hold under the $L^\psi$ norm, see e.g \cite{JA,KL}.

By a result of Koml\'os \cite{KO}, every sequence $(X_n)$ of random variables with bounded $L_2$ norms satisfying $X_n\to 0$  weakly in $L_2$ has a subsequence $(X_{n_k})$ which is an unconditional convergence sequence, i.e. $\sum_{k=1}^\infty c_k X_{n_k}$ converges a.s.\ after any permutation of its terms provided $\sum_{k=1}^\infty c_k^2<\infty$. This was an unsolved problem originating from the 1930's, see Ulyanov \cite{UL}, p.\ 54. A simple proof of Koml\'os' result was given by Aldous \cite{ald}, using an equicontinuity argument. A similar method  was used in \cite{BT2} to prove uniform limit theorems for normed lacunary sums $A_N^{-1}\sum_{k=1}^N a_k X_{n_k}$ with limit distribution having Fourier transform $\exp (-c|t|^\alpha)$, $0<\alpha<2$, $c>0$.

\section{ Proof of Theorem \ref{th3}.}
We follow the argument in \cite{BT}.
We will need the following

\begin{lemma}  \label{rrlemma} (Ranga Rao \cite{rara}).
Let $(S, d)$ be a separable metric space and $\nu, \nu_n$ $(n=1, 2, \ldots)$ probability measures on the Borel sets of $(S, d)$ such that $\nu_n\overset{\mathcal D}{\longrightarrow} \nu$. Let $\mathcal G$ be a cass of real valued functions on $(S, d)$ such that

\smallskip(a) $\mathcal{G}$ is locally equicontinuous.

\smallskip (b) There exists a continuous function $g\ge 0$ on $S$ such that $|f(x)|\le g(x)$ for all $f \in \mathcal G$ and $x\in S$ and
$$ \int_S g(x)\nu_n (dx)\longrightarrow \int_S g(x)\nu(dx)\  (<\infty) \ \ \text{as} \ \ n\to\infty.$$
Then
$$ \int_S f(x)\nu_n (dx) \longrightarrow \int_S f(x)\nu(dx) \ \ \text{as} \ \ n\to\infty$$
uniformly in $f\in \mathcal G$.
\end{lemma}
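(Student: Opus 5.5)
The plan is to reduce first to a uniformly bounded class by truncation at the level of $g$, and then to handle the bounded equicontinuous class through a Skorokhod coupling.

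\emph{Step 1 (uniform integrability of $g$).} Since $g$ is continuous and nonnegative, $g\wedge M$ is bounded and continuous for each $M>0$. Weak convergence gives $\int (g\wedge M)\,d\nu_n\to\int (g\wedge M)\,d\nu$. Subtracting this from hypothesis (b), we get
$$\int (g-M)^+\,d\nu_n\longrightarrow\int (g-M)^+\,d\nu \quad (n\to\infty).$$
Since $\int g\,d\nu<\infty$, the right-hand side tends to $0$ as $M\to\infty$. Hence, for every $\eta>0$, there exist $M$ and $n_0$ with $\sup_{n\ge n_0}\int (g-M)^+\,d\nu_n<\eta$, and also $\int(g-M)^+\,d\nu<\eta$.

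\emph{Step 2 (truncation).} For $f\in\mathcal G$, put $f_M=\max(\min(f,M),-M)$. Because $|f|\le g$, the functions $f$ and $f_M$ differ only where $g>M$, and there $|f-f_M|\le |f|-M\le (g-M)^+$. Consequently
$$\sup_{f\in\mathcal G}\Big|\int f\,d\nu_n-\int f_M\,d\nu_n\Big|\le \int (g-M)^+\,d\nu_n ,$$
and the same holds for $\nu$; by Step 1 both are $<\eta$ for large $n$. The class $\mathcal G_M=\{f_M: f\in\mathcal G\}$ is still locally equicontinuous, since clamping is $1$-Lipschitz, and it is uniformly bounded by $M$. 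So it suffices to prove uniform convergence for a uniformly bounded, locally equicontinuous class.

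\emph{Step 3 (bounded case via Skorokhod).} Since $S$ is separable and $\nu_n\to\nu$ weakly, Skorokhod's representation theorem gives random elements $Z_n, Z$ on a common probability space with laws $\nu_n$ and $\nu$ and $Z_n\to Z$ almost surely. Fix $\varepsilon>0$. For each $x\in S$, local equicontinuity gives $\delta_\varepsilon(x)>0$ with $|h(y)-h(x)|<\varepsilon$ for all $h\in\mathcal G_M$ whenever $d(x,y)<\delta_\varepsilon(x)$. Then for all $h\in\mathcal G_M$,
$$|h(Z_n)-h(Z)|\le \varepsilon+2M\,\mathbf 1\{d(Z_n,Z)\ge \delta_\varepsilon(Z)\}.$$
Taking the supremum over $h$ and then expectations, the indicator tends to $0$ almost surely, so its (outer) expectation tends to $0$ by dominated convergence. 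This yields
$$\limsup_n\sup_{h}\Big|\int h\,d\nu_n-\int h\,d\nu\Big|\le\varepsilon.$$
Combining this with Step 2 and letting $\eta,\varepsilon\to0$ proves the lemma.

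\emph{Main obstacle.} The delicate point is measurability in Step 3: $\sup_h|h(Z_n)-h(Z)|$ and $x\mapsto\delta_\varepsilon(x)$ need not be measurable. I would handle this by replacing $\delta_\varepsilon(x)$ with the supremum of admissible radii (capped at $1$), and checking that this function is lower semicontinuous, or at least bounded below by a lower semicontinuous positive function. To do so I would shrink radii by a factor of $2$ and use the triangle inequality with $2\varepsilon$ in place of $\varepsilon$. That makes the event $\{d(Z_n,Z)\ge\delta_\varepsilon(Z)\}$ measurable. Alternatively, I would work with outer expectations throughout, which suffices because only upper bounds are needed.
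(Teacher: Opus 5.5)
The paper does not prove this lemma; it quotes it from Ranga Rao. So your argument has to stand on its own, and it does: the proposal is correct.

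The truncation step is sound. Weak convergence applied to the bounded continuous $g\wedge M$, subtracted from (b), gives $\int (g-M)^+\,d\nu_n\to\int (g-M)^+\,d\nu$. Clamping at $\pm M$ then costs at most $\int (g-M)^+$, uniformly in $f$, and preserves local equicontinuity.

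Compare this with the classical argument for the bounded case. That argument uses separability (Lindel\"of) to cover $S$ by countably many open balls with $\nu$-null boundaries, on each of which every $f\in\mathcal G$ oscillates by less than $\varepsilon$. It disjointifies them into $\nu$-continuity sets $A_j$ and compares $\int f\,d\nu_n$ with $\sum_{j\le N} f(x_j)\,\nu_n(A_j)$, using only $\nu_n(A_j)\to\nu(A_j)$. Your coupling proof is shorter and more transparent. However, it rests on a heavier theorem: Skorokhod's representation in a separable, not necessarily complete, metric space, i.e.\ Dudley's version. It also creates a measurability problem for the equicontinuity radius that the partition argument never meets.

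On that point, some corrections.

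\textbf{Semicontinuity of the radius.} The supremum of admissible radii need not be lower semicontinuous when $\mathcal G$ is infinite. Let $R(x)$ be the supremum, capped at $1$, of those $\delta$ with $|h(y)-h(x)|\le\varepsilon$ for all $h$ and all $y$ with $d(x,y)<\delta$. This $R$ is \emph{upper} semicontinuous. Indeed, if $x_k\to x$ and $R(x_k)\ge R(x)+\eta$, pass to the limit in $|h(y)-h(x_k)|\le\varepsilon$ to see that $R(x)+\eta/2$ is admissible at $x$, a contradiction. Hence $R$ is Borel, which is all you need.

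\textbf{Your factor-$2$ fallback.} This also works. With $r$ the admissible-radius function for $\varepsilon$, set $\tilde\delta(x)=\sup\{r(z)/2:\ d(x,z)<r(z)/2\}$. It is lower semicontinuous, satisfies $\tilde\delta\ge r/2>0$, and is admissible for $2\varepsilon$ by the triangle inequality through $z$.

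\textbf{Concluding Step 3.} With either choice, $A_n=\{d(Z_n,Z)\ge\delta(Z)\}$ is an event, and $P(A_n)\to0$ by dominated convergence. You never need $\sup_h|h(Z_n)-h(Z)|$ to be measurable. The bound $|\int h\,d\nu_n-\int h\,d\nu|\le 2\varepsilon+2M\,P(A_n)$ holds for each $h$ separately, and you take the supremum afterwards.

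\textbf{Outer expectations.} Drop this alternative. For non-measurable $A_n$, $\mathbf 1_{A_n}\to0$ almost surely does not imply $P^*(A_n)\to0$, because outer measure is not continuous from above.
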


Let now $(\Omega,{\mathcal F}, P)$ be the probability space of the $X_n$'s and ${\bf
X} = (X_1,X_2,\ldots)$.
Let $(Y_n)$ be a sequence of
r.v.'s on $(\Omega ,{\mathcal F}, P)$ such that, given {\bf X} and
$\mu$, the r.v.'s $Y_1,Y_2,\ldots\ $ are conditionally i.i.d.\
with distribution $\mu$, {\it i.e.},

\begin{equation}\label{18c}
P(Y_1\in A_1,\ldots,Y_k\in A_k\vert {\bf X},\mu) = \prod_ {i=1}^k
P(Y_i\in A_i\vert {\bf X},\mu) \quad \textup{a.s.}
\end{equation}
\begin{equation}\label{19c}
P(Y_j\in A\vert {\bf X},\mu )= \mu (A) \quad \textup{a.s.}
\end{equation}
for any $j,k$ and Borel sets $A,A_1,\ldots,A_k$ on the real line.
Such a sequence $(Y_n)$ always exists after a suitable enlargement
of the probability space, see e.g.\ \cite{ald}, p.\ 72.
Obviously, $(Y_n)$ is an exchangeable sequence and (\ref{18c}) and (\ref{19c}) imply
\begin{equation}\label{5c}
P(Y_{i_1}\in I_1,\ldots,Y_{i_k}\in I_k) = \E\left( \mu (I_1)\ldots \mu
(I_k)\right)
\end{equation}
for every  $k\ge 1$, $i_1<\cdots <i_k$ and left closed intervals
$I_1,\ldots,I_k \subset \R^1$.

We recall also the following lemma from \cite{BT}:
\begin{lemma}\label{alemma}
For every $\sigma({\mathbf X})$-measurable random variable $Z$ and any $j\ge 1$ we have
$$ (X_n, Z) \overset{\mathcal D}{\longrightarrow} (Y_j, Z) \ \ \text{as} \ n\to\infty.$$
\end{lemma}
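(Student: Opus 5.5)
The plan is to prove the joint convergence through joint characteristic functions (or, equivalently, through test functions of product form). It suffices to show that
$$\E\bigl[g(X_n)h(Z)\bigr]\longrightarrow \E\bigl[g(Y_j)h(Z)\bigr]\qquad (n\to\infty)$$
for every bounded continuous $g:\R\to\C$ and every bounded Borel $h:\R\to\C$. Taking $g(x)=e^{isx}$ and $h(z)=e^{itz}$ then gives pointwise convergence of the joint characteristic functions of $(X_n,Z)$ to those of $(Y_j,Z)$. By L\'evy's continuity theorem this yields $(X_n,Z)\overset{\mathcal D}{\longrightarrow}(Y_j,Z)$.

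First I identify the right-hand side. Since $Z$ is $\sigma({\bf X})$-measurable, condition on $({\bf X},\mu)$ and use (\ref{19c}):
$$\E\bigl[g(Y_j)h(Z)\bigr]=\E\Bigl[h(Z)\,\E\bigl(g(Y_j)\,\big|\,{\bf X},\mu\bigr)\Bigr]=\E\Bigl[h(Z)\int_\R g\,d\mu\Bigr].$$
This holds for any $j$. The identity $\E(g(Y_j)\mid{\bf X},\mu)=\int g\,d\mu$ follows from (\ref{19c}) for indicators $g$, and then for bounded measurable $g$ by linearity and a monotone class argument.

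Next I treat the left-hand side, starting with $h(Z)$ replaced by an indicator $1_A$, where $A\in\sigma({\bf X})$ and $P(A)>0$. By the determining property, $P(X_n\le t\mid A)\to F_A(t)$ at continuity points of $F_A$, and $F_A(t)=\E_A\,\mu(-\infty,t]$.

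This step is where I expect the only real care to be needed. I must check that $F_A$ is a genuine distribution function, with no mass escaping to $\pm\infty$. This holds because $\mu$ is a.s.\ a probability measure, so by monotone convergence $\E_A\mu(-\infty,t]\to 1$ or $0$ as $t\to\pm\infty$. Hence the convergence of conditional distribution functions is weak convergence of the conditional laws, and $\E_A g(X_n)\to\int g\,dF_A$. Moreover, $\int g\,dF_A=\E_A\int g\,d\mu$: this holds for $g=1_{(-\infty,t]}$ by definition, and extends to bounded continuous $g$ by linearity and monotone class (equivalently, $F_A$ is the law of $\mu$ averaged over $A$, which is a Fubini argument). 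Thus
$$\E[g(X_n)1_A]\to\E\Bigl[1_A\int g\,d\mu\Bigr].$$
If $P(A)=0$ the claim is trivial.

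Finally, by linearity this extends from $1_A$ to simple $\sigma({\bf X})$-measurable random variables. A general bounded $h(Z)$ is a uniform limit of such simple variables. The error term is bounded by $\|g\|_\infty\,\|h(Z)-s\|_\infty$ uniformly in $n$, and likewise on the $Y_j$ side, so a $3\varepsilon$-argument completes the convergence. This proves the lemma.
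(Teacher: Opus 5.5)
The paper gives no proof of this lemma; it only recalls it from \cite{BT}. Your argument is correct and complete, and it is the standard one. You establish the stable convergence $\E[g(X_n)W]\to\E\bigl[W\int g\,d\mu\bigr]$ for every bounded $\sigma({\bf X})$-measurable $W$: first for $W=1_A$ via the determining property, then by linearity and uniform approximation. You then identify the limit with $\E[g(Y_j)W]$ through (\ref{19c}).

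One point should be stated more carefully. The identity $F_A(t)=\E_A\mu(-\infty,t]$ is only given at continuity points of $F_A$, so your ``by definition'' for $g=1_{(-\infty,t]}$ needs an extra step. The right-continuity of $t\mapsto\E_A\mu(-\infty,t]$ (monotone convergence) shows that $F_A$ is the distribution function of the averaged measure $B\mapsto\E_A\mu(B)$; your Fubini remark is exactly this.

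For the statement as written there is a shorter route. Apply the determining property directly to $A=\{Z\le s\}$ (trivial if $P(Z\le s)=0$). This gives $P(X_n\le t,\,Z\le s)\to\E\bigl[\mu(-\infty,t]\,1_{\{Z\le s\}}\bigr]=P(Y_j\le t,\,Z\le s)$ at every continuity point $(t,s)$ of the limiting joint distribution function. That avoids characteristic functions and the simple-function approximation.

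Your stable-convergence formulation buys more, though. In the paper the lemma is applied with $Z=\mu$, a random element of $(S,d)$ rather than a real random variable, and there L\'evy's theorem is not available. Still, convergence of $\E[g(X_n)h(\mu)]$ for all bounded continuous $g,h$ yields $(X_n,\mu)\overset{\mathcal D}{\longrightarrow}(Y_1,\mu)$, which is what is actually used. This needs tightness of $(X_n,\mu)$, which holds because $S$ with the $W_2$ metric is Polish.
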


\bigskip
Fix now $0<\varepsilon\le 1/2$.  We will construct a sequence $n_1<n_2<\cdots \ $ of integers such that
\begin{equation} \label{24c}
(1-\varepsilon) \rho_k (a_1,\ldots,a_k) \le \E f_k (a_1
X_{n_1}, \ldots a_k X_{n_k}) \le (1+\varepsilon)\rho_k (a_1,\ldots,a_k)
\end{equation}
for every $k\ge 1$ and $(a_1,\ldots,a_k)\in \R^k$. where
$\rho_k$ is defined by (\ref{rhodef}).
To construct $n_1$ we set
\begin{align*}
Q({\bf a},n,\ell) = f_\ell (a_1 X_n, a_2Y_2, \ldots, a_\ell Y_\ell), \qquad
R({\bf a},\ell) = f_\ell (a_1Y_1, a_2Y_2, \ldots, a_\ell Y_\ell)
\end{align*}
for every $n\ge 1$, $\ell \ge 2$ and ${\bf a} =
(a_1,\ldots,a_\ell) \in \R^\ell$.  We show that
\begin{equation}\label{25c}
\E \left\{ \frac{Q({\bf a},n,\ell)}{\rho_\ell ({\bf a})} \right\}
\longrightarrow   \E \left\{ \frac{R({\bf a},\ell)}{\rho_\ell ({\bf
a})} \right\} \ \hbox{ as }\ n\to \infty \quad \hbox{uniformly
in }\ {\bf a}\ne {\bf 0},\ell.
\end{equation}
Note that the right hand side of (\ref{25c}) equals 1 and for ${\bf a}\ne {\bf 0}$ we have $\varrho_\ell ({\bf a})\ne {\bf 0}$ by relation  (\ref{lowerbound}).   To this end we recall
that, given {\bf X} and $\mu$, the r.v.'s $Y_1,Y_2,\ldots \ $ are
conditionally i.i.d.\ with common conditional distribution $\mu$
and thus, given ${\bf X},\mu$ and $Y_1$, the r.v.'s
$Y_2,Y_3,\ldots \ $ are conditionally i.i.d.\ with distribution
$\mu$.  Thus
\begin{equation*}
\E \bigl( Q({\bf a},n,\ell)\vert {\bf X},\mu\bigr) =g^{{\bf a},\ell} (X_n,\mu)
\end{equation*}
and
\begin{equation}\label{27c}
\E \bigl( R({\bf a},\ell)\vert {\bf X},\mu,Y_1\bigr) = g^{{\bf
a},\ell} (Y_1,\mu)
\end{equation}
where
\begin{equation*}
g^{{\bf a},\ell} (t,\nu) = \E f_\ell (a_1 t, a_2\xi_2^ {(\nu)}, \ldots, a_\ell \xi_\ell^{(\nu)})
\qquad (t\in \R^1\ ,\ \nu \in S)
\end{equation*}
and $(\xi_n^{(\nu)})$ is an i.i.d.\ sequence with distribution
$\nu$. Integrating
the previous relations, we get
\begin{equation}\label{26c}
\E \bigl( Q({\bf a}, n,\ell)\bigr) = \E g^{{\bf a},\ell} (X_n,\mu)
\end{equation}
\begin{equation}\label{27d}
\E \bigl( R({\bf a},\ell)\bigr) = \E g^{{\bf a},\ell} (Y_1,\mu)
\end{equation}
and thus (\ref{25c}) is equivalent to
\begin{equation}\label{28c}
\E \frac{g^{{\bf a},\ell} (X_n,\mu)}{\rho_\ell ({\bf a})}
\longrightarrow \E \frac{g^{{\bf a},\ell} (Y_1,\mu)}{\rho_\ell ({\bf
a})} \ \hbox{ as }\ n\to \infty\ ,\ \hbox{ uniformly in }  \ \ell, {\bf a}\ne {\bf 0}.
\end{equation}
We shall derive (\ref{28c}) from Lemma \ref{rrlemma}.
It is easy to see that the metric $d$ in Theorem \ref{th3} is a separable metric on
$$ S=\left\{ \nu \in \mathcal{M} : \int_{\mathbb{R}} x d\nu (x)=0, \ \int_{\mathbb{R}} x^2 d\nu (x)<\infty \right\}$$
and it generates the same Borel $\sigma$-field on $S$ as the Prokhorov metric $\pi$. But then the limit random measure $\mu$, which is a random variable
taking values in $(S, \pi)$,
can also be regarded a as a random variable taking values in $(S, d)$.
Also, $\mu$ is clearly $\sigma(X)$ measurable and thus
$(X_n, \mu)\overset{\mathcal D}{\rightarrow} (Y_1, \mu)$ by Lemma \ref{alemma}. Hence (\ref{28c}) will follow from Lemma \ref{rrlemma} (note the equivalence of
(\ref{25c}) and (\ref{28c})) if we show that the class of functions
\begin{equation}\label{29c}
\left\{ \frac{g^{{\bf a},\ell}(t,\nu)}{\rho_\ell ({\bf a}) }
\right\}
\end{equation}
defined on the product metric space $(\R^1\times S\ ,\
\lambda^1\times d)$ ($\lambda^1$ denotes the ordinary distance on
$\R^1$) satisfies conditions (a),(b) of Lemma \ref{rrlemma}. The validity of (a) follows immediately from relations (\ref{a2}) and (\ref{lowerbound}).
On the other hand, using (\ref{a2}), (\ref{lowerbound}) we get for any $\nu \in S$, $t\in \mathbb{R}^1$ and
${\bf a} = (a_1,\ldots a_\ell)\in \mathbb{R}^\ell$, that


\begin{align*}
&|\E f_\ell (a_1t, a_2\xi_2^{(\nu)}, \ldots,  a_\ell\xi_\ell^{(\nu)})| \\
&\le |\E f_\ell (0, a_2\xi_2^{(\nu_0)}, \ldots,  a_\ell\xi_\ell^{(\nu_0)})|  + |a_1 t| + |\E f_\ell(a_1Y_1, \ldots, a_\ell Y_\ell)| d(\nu, \nu_0)\\
& = |a_1 t|+|\E f_\ell(a_1Y_1, \ldots, a_\ell Y_\ell)| d(\nu, \nu_0)\\
&  \le \rho_\ell ({\bf{a}})|t| + \rho_\ell ({\bf{a}})d(\nu, \nu_0),
\end{align*}
where $\nu_0$ denotes the probability measure concentrated at 0 and thus the $\xi_j^{(\nu_0)}$ are identically 0. Hence to verify assumption (b) of Lemma \ref{rrlemma} we need to check that if $(X_n, \nu)\overset{\mathcal D}{\rightarrow} (Y_1, \nu)$, then
$$ \E\left( |X_n|+ d(\nu, \nu_0)\right)\longrightarrow \E\left(|Y_1|+ d(\nu, \nu_0)\right).$$
This follows, however, from the uniform integrability of the $X_n$ and the fact that $d(\nu, \nu_0)=\E (\int_{\mathbb{R}}x^2 d\nu(x))^{1/2}<\infty$ by (\ref{finitevar}).
We thus proved relation (\ref{28c}) and thus also
(\ref{25c}) whence it follows (note again that the right side of
(\ref{25c}) equals 1) that
\begin{align}\label{34c}
&\rho_\ell ({\bf a})^{-1} \E f_\ell (a_1X_n, a_2Y_2, \ldots, a_\ell Y_\ell)
\cr
& \to \rho_\ell ({\bf a})^{-1} \E f_\ell (a_1Y_1, a_2Y_2, \ldots, a_\ell
Y_\ell) \ \hbox{ as }\ n\to \infty
\end{align}
uniformly in $\ell, {\bf a}$.  Hence we can choose $n_1$ so large
that
$$
\E f_\ell (a_1X_{n_1}, a_2Y_2,  \ldots, a_\ell Y_\ell)
-\E f_\ell (a_1Y_1, a_2Y_2, \ldots, a_\ell Y_\ell)
\big\vert \le \frac{\varepsilon}{2} \rho_\ell (a_1,\ldots,a_\ell)$$
 for every ${\bf a}\ne {\bf 0}$ and $\ell\ge 1$. The last relation clearly holds for $\bf a={\bf 0}$, and thus the first induction step is completed.

Assume now that $n_1,\ldots ,n_{k-1}$ have already been chosen.
Exactly in the same way as we proved (\ref{34c}), just putting $a_kX_n $ at the $k$-th location,
it follows that for $\ell >k$
\begin{align*}&\rho_\ell ({\bf a})^{-1} \E f_\ell (a_1X_{n_1}, \ldots, a_{k-1}X_{n_{k-1}},
a_k X_n, a_{k+1}Y_{k+1}, \ldots, a_\ell Y_\ell ) \cr
&\longrightarrow \rho_\ell ({\bf a})^{-1} \E f_\ell ( a_1X_{n_1}, \ldots,
a_{k-1} X_{n_{k-1}},  a_kY_k, \ldots,  a_\ell Y_\ell) \ \hbox{
as }\ n\to \infty\cr
\end{align*}
uniformly in {\bf a} and $\ell$.  Hence we can choose $n_k$ so
large that $n_k>n_{k-1}$ and
\begin{align*}&\big\vert \ \E f_\ell (a_1X_{n_1}, \ldots,  a_{k-1}X_{n_{k-1}},
a_kX_{n_k},  a_{k+1}Y_{k+1}, \ldots,  a_\ell Y_\ell) \cr
&\qquad - \E f_\ell (a_1X_{n_1}, \ldots, a_{k-1}X_{n_{k-1}},  a_kY_k,
\ldots, a_\ell Y_\ell) \big\vert \le \frac{\varepsilon}{2^k}
\rho_\ell (a_1,\ldots ,a_\ell) \cr
\end{align*}
for every $(a_1,\ldots,a_\ell)\in \R^\ell$ and $\ell >k$.  This
completes the $k$-th induction step; the so constructed sequence
$(n_k)_{k \geq 1}$ clearly satisfies
$$\big\vert \ \E f_\ell (a_1X_{n_1}, \ldots,  a_\ell X_{n_\ell})
- \E f_\ell (a_1Y_1, \ldots, a_\ell Y_\ell) \vert \le \varepsilon
\rho_\ell (a_1,\ldots ,a_\ell)$$ for every $\ell \ge 1$  and
$(a_1,\ldots, a_\ell)\in \R^\ell$.
The last relation is equivalent to (\ref{24c}) and thus Theorem \ref{th3} is proved.


\begin{proof}[Proof of Theorem \ref{th4}]

Since $\psi$ is convex and non-decreasing, the right derivative $\psi_+'(t)$ exists for every $t>0$, and for
$0<x<y$ we have
\[
\frac{\psi(y)-\psi(x)}{y-x}\le \psi_+'(y).
\]
Moreover,
\[
\psi_+'(t)\le \frac{\psi(2t)-\psi(t)}{t}\le \frac{\psi(2t)}{t}\ll t^{p-1},
\qquad t>0.
\]
Hence, for $0<x<y$,
\[
\psi(y)-\psi(x)\ll (y-x)y^{p-1}.
\]
Since
\[
y^{p-1}\le x^{p-1}+y^{p-1},
\]
it follows that
\[
\psi(y)-\psi(x)\ll |x-y|\bigl(x^{p-1}+y^{p-1}\bigr).
\]
Interchanging $x$ and $y$ if necessary, we obtain
\begin{equation}\label{2n}
|\psi(x)-\psi(y)| \ll |x-y|\bigl(x^{p-1}+y^{p-1}\bigr),
\qquad x,y>0.
\end{equation}
Letting
\begin{equation}\label{newdef}
h^{{\bf a}, l} (t, \nu)= \E\psi\bigl(|a_1t +\sum_{i=2}^\ell a_i \xi_i^{(\nu)}\bigr|\bigr),
\end{equation}
relation (\ref{2n}) implies that the estimates for $|g^{{\bf a}, l} (t, \nu)|$ and $|g^{{\bf a}, l} (t, \nu)- g^{{\bf a}, l} (t', \nu')|$ obtained in 
\cite{BT}, page 2060 remain valid for $|h^{{\bf a}, l} (t, \nu)|$ and $|h^{{\bf a}, l} (t, \nu)- h^{{\bf a}, l} (t', \nu')|$ 
and consequently the class of functions
\begin{equation*}
\left\{ \frac{h^{{\bf a},\ell}(t,\nu)}{  \|{\bf a}\|_2^p }
\right\}
\end{equation*}
satisfies conditions (a) and (b) of Lemma 2.1. Thus the induction procedure on pp.\ 2060-2062 of \cite{BT} remains valid and leads to a subsequence $(X_{n_k})$  such that for any $\varepsilon > 0$ we have
\[
\mathbb{E}\psi\left(\left|\sum_{i=1}^k a_i X_{n_i}\right|\right)
\overset{1+\varepsilon}{\approx}
\mathbb{E}\psi\left(\left|\sum_{i=1}^k a_i Y_i\right|\right),
\]
where the symbol $\overset{1+\varepsilon}{\approx}$ means that the ratio of the two sides is
between $1-\varepsilon$ and $1+\varepsilon$.
Replacing $\psi(x)$ by $\psi(x/C)$ for any fixed $C > 0$ means only a scale change and thus the subsequence $(X_{n_k})$ provided by the induction procedure will satisfy 
also 
\[
\mathbb{E}\psi\left(\frac{\left|\sum_{i=1}^k a_i X_{n_i}\right|}{C}\right)
\overset{1+\varepsilon}{\approx}
\mathbb{E}\psi\left(\frac{\left|\sum_{i=1}^k a_i Y_i\right|}{C}\right)
\]
for all $C>0$ simultaneously. Let now
\[
f = \sum_{i=1}^k a_i X_{n_i},
\qquad
g = \sum_{i=1}^k a_i Y_i.
\]
Since \(\psi\) is continuous, if \(C^*\) denotes the \(L^\psi\)-norm of \(g\), then
$\mathbb{E}\,\psi\!\left(|g|/C^*\right)=1,$
and thus
\begin{equation}\label{5n}
1-\varepsilon \le \mathbb{E}\psi(|f|/C^*) \le 1+\varepsilon.
\end{equation}
Let $C^{**}$ denote the $L^\psi$ norm of $f$, then
\begin{equation}\label{6n}
\mathbb{E}\psi(|f|/C^{**}) = 1.
\end{equation}
Let
\[
G(c):=\mathbb{E}\psi(c|f|), \qquad
a:=\frac{1}{C^{**}}, \qquad b:=\frac{1}{C^*}.
\]
Then
\[
G(a)=1,
\qquad
1-\varepsilon \le G(b)\le 1+\varepsilon.
\]
Since \(G\) is convex, which follows from the convexity of $\psi$ and since \(G(0)=0\), the function
\[
c \mapsto \frac{G(c)}{c}
\]
is nondecreasing on \([0,\infty)\). Indeed, for \(0<x<y\), and by convexity of \(G\),
\[
G(x)
=
G\!\left(\frac{x}{y}\,y+\left(1-\frac{x}{y}\right)0\right)
\le
\frac{x}{y}G(y)+\left(1-\frac{x}{y}\right)G(0)
=
\frac{x}{y}G(y).
\]
Dividing by \(x>0\), we obtain
\[
\frac{G(x)}{x}\le \frac{G(y)}{y},
\]
which proves the claim.

If \(b\le a\), then
\[
\frac{G(b)}{b}\le \frac{G(a)}{a}=\frac{1}{a},
\]
so
\[
1-\varepsilon \le G(b)\le \frac{b}{a}=\frac{C^{**}}{C^*}\le 1.
\]
If \(b\ge a\), then
\[
\frac{G(b)}{b}\ge \frac{G(a)}{a}=\frac{1}{a},
\]
so
\[
1\le \frac{C^{**}}{C^*}=\frac{b}{a}\le G(b)\le 1+\varepsilon.
\]
Therefore,
\[
1-\varepsilon \le \frac{C^{**}}{C^*}\le 1+\varepsilon,
\]
i.e.\ the ratio of the $L^\psi$ norm of $f$ and $g$ is between positive constant bounds.
In other words,
\begin{equation} \label{xni}
K_1\Bigl\|\sum_{i=1}^k a_i Y_i\Bigr\|_{L^\psi}
\le
\Bigl\|\sum_{i=1}^k a_i X_{n_i}\Bigr\|_{L^\psi}
\le
K_2\Bigl\|\sum_{i=1}^k a_i Y_i\Bigr\|_{L^\psi}
\end{equation}
for all $k\ge 1$ and all $(a_1, \ldots, a_k)\in \mathbb{R}^k$ with some positive constants $K_1, K_2$.
We now claim 
\begin{equation}\label{mzphi}
C \|Y\|_{1}
\left(\sum_{i=1}^k a_i^2\right)^{1/2}
\le \left\|\sum_{i=1}^k a_i Y_i\right\|_{L^\psi}
\le
\|Y\|_{2}
\left(\sum_{i=1}^k a_i^2\right)^{1/2}
\end{equation}
for some constant $C>0$. In the case when the $Y_j$ are i.i.d.\ random variables with mean 0 and finite variance, the upper bound in (\ref{mzphi}) follows from the fact that by the relation $x \ll \psi(x) \ll x^2$ and the embedding relation (\ref{embedding}), the $L^2$ norm dominates the $L^\psi$ norm. The lower bound, on the other hand, follows from the fact that the $L^\psi$ norm dominates the $L^1$ norm, thereby reducing the problem to the already proven inequality (\ref{mzz}) for $p=1$. Since the limit exchangeable sequence $(Y_j)$ is conditionally i.i.d.\ with respect to its tail $\sigma$-algebra with conditional mean zero and conditional finite variance, the validity of (\ref{mzphi}) in the exchangeable case follows from the i.i.d.\ case by integration. Combining (\ref{xni}) and (\ref{mzphi} we get  
$$\left(\sum_{i=1}^k a_i^2\right)^{1/2}  \ll \left\|\sum_{i=1}^k a_i X_{n_i}\right\|_{L^\psi} \ll \left(\sum_{i=1}^k a_i^2\right)^{1/2} $$
and thus the subspace spanned by $(X_{n_k})$ in $L^\psi$ is equivalent to $\ell^2$.


The necessity part of Theorem 1.1 in \cite{BT} was deduced from the relation
$$ \left\| \frac{1}{\sqrt{N}} \sum_{k=1}^N X_{n_k} \right\|_p = O(1) $$
on p.\ 2062. Since by the above explanation the last relation remains valid under the Orlicz norm, the proof of the necessity part of Theorem \ref{th4} follows again the same way as in \cite{BT}.
\end{proof}

\begin{rem}
   We remark that assumption \eqref{a2} implies that Theorem \ref{th3} recovers Theorem \ref{th2} only in the case \(p=1\). However, Theorem \ref{th4} fully covers Theorems \ref{th1} and \ref{th2}.
\end{rem}



\textbf{Acknowledgments}
The authors would like to express their sincere gratitude to the referee for the careful reading of the manuscript and for many insightful comments and valuable suggestions, which led to a substantial improvement of the paper. ES expresses gratitude to Christoph Aistleitner and Andrei Shubin for helpful discussions.

\bibliographystyle{siam}

\end{document}